\title{Integer properties of a composition of exponential generating functions}
\author{D.~V.~Kruchinin\\
\small Tomsk State University of Control Systems and Radioelectronics, Russian Federation\\
\small \texttt{kruchininDm@gmail.com}\\
}
\begin{document}
\maketitle

\begin{abstract}
In this paper, we study a composition of exponential generating functions.
We obtain new properties of this composition, which allow to distinguish prime numbers  from composite numbers.
Using the result of paper we get the known properties of the Bell numbers(Touchard's Congruence for $k=0$) and  new properties of the Euler numbers.

Key words: exponential generating function, composition of generating functions, composita, primality, Touchard's Congruence, Bell numbers, Euler numbers.
\end{abstract}

\theoremstyle{plain}
\newtheorem{theorem}{Theorem}
\newtheorem{corollary}[theorem]{Corollary}
\newtheorem{lemma}[theorem]{Lemma}
\newtheorem{proposition}[theorem]{Proposition}

\theoremstyle{definition}
\newtheorem{definition}[theorem]{Definition}
\newtheorem{example}[theorem]{Example}
\newtheorem{conjecture}[theorem]{Conjecture}
\theoremstyle{remark}
\newtheorem{remark}[theorem]{Remark}

\newtheorem{Theorem}{Theorem}[section]
\newtheorem{Proposition}[Theorem]{Proposition}
\newtheorem{Corollary}[Theorem]{Corollary}

\theoremstyle{definition}
\newtheorem{Example}[Theorem]{Example}
\newtheorem{Remark}[Theorem]{Remark}
\newtheorem{Problem}[Theorem]{Problem}
\newtheorem{state}[Theorem]{Statement}
\makeatletter
\def\rdots{\mathinner{\mkern1mu\raise\p@
\vbox{\kern7\p@\hbox{.}}\mkern2mu
\raise4\p@\hbox{.}\mkern2mu\raise7\p@\hbox{.}\mkern1mu}}
\makeatother

\section{Introduction}
\label{sec:in}
Generating functions are a powerful tool for solving problems in number theory, combinatorics, algebra, probability theory. One of the advantages of the generating functions is that an infinite number sequence can be represented in the form of single expression. 

There are various types of generating functions: ordinary, exponential, Dirichlet, Poisson, etc. In this paper we consider  exponential generating functions.
\begin{definition}
 		Power series of the form
 			\begin{equation}
 			\label{formula1}
 			\sum_{n=1}^\infty\frac{a(n)}{n!}x^{n}
 			\end{equation} 
 		 is a \textit{exponential generating function}, where $a(n)$ is an integer sequence.
\end{definition}

\section{Preliminary}

V.~V.~Kruchinin \cite{KruCompositae} introduced the notion of the \emph{composita} of a given generating function 
\begin{math}
F(x)=\sum_{n>0}f(n)x^n.
\end{math}

Suppose 
\begin{math}
F(x) = \sum_{n>0} f(n) x^n
\end{math}
 is the generating function, in which there is no free term $f(0)=0$.
From this generating function we can write the following condition:
\begin{displaymath}
[F(x)]^k=\sum_{n>0} F(n,k)x^n.
\end{displaymath}
The expression $F(n,k)$ is the \emph{composita} and it's denoted by $F^{\Delta}(n,k) $. 

Also the \emph{composita} can be written based on the composition of $n$:
\begin{definition}
The composita is the function of two variables defined by
\begin{equation}
\label{Fnk0}F^{\Delta}(n,k)=\sum_{\pi_k \in C_n}{f(\lambda_1)f(\lambda_2) \cdots f(\lambda_k)},
\end{equation}
where $C_n$ is the set of all compositions of an integer $n$, $\pi_k$ is the composition
\begin{math}
\sum_{i=1}^k\lambda_i=n
\end{math}
 into $k$ parts exactly.
\end{definition}

For instance, we obtain the composita of the exponential generating function 
\begin{math}
F(x)=e^x-1.
\end{math}

Raising this generating function to the power of $k$ and applying the binomial theorem, we obtain 
\begin{displaymath}
\left(e^x-1\right)^k=\sum_{j=0}^k {k \choose j}e^{xj}(-1)^{k-j}.
\end{displaymath}
Since 
\begin{displaymath}
\left(e^{x} \right)^k=\sum_{n\geq 0} \frac{k^n}{n!}x^n,
\end{displaymath}
we get
\begin{displaymath}
F^{\Delta}(n,k)=\sum_{j=0}^k {k \choose j}\frac{j^n}{n!}(-1)^{k-j}.
\end{displaymath}
or
since the general formula for the Stirling numbers of the second kind is given as follows:
\begin{displaymath}
\genfrac{\{}{\}}{0pt}{}{n}{k}=\frac{1}{k!}\sum_{j=0}^k(-1)^{k-j}\binom{k}{j}j^n,
\end{displaymath}
we have
\begin{equation}
\label{exp(x)-1}
F^{\Delta}(n,k)=\frac{k!}{n!}\genfrac{\{}{\}}{0pt}{}{n}{k}.
\end{equation}

Here 
\begin{math}
\genfrac{\{}{\}}{0pt}{}{n}{k}
\end{math}
stand for the Stirling numbers  of the second kind(see \cite{Comtet_1974,ConcreteMath}).
The Stirling numbers of the second kind 
\begin{math}
\genfrac{\{}{\}}{0pt}{}{n}{k}=S(n,k)
\end{math}
count the number of ways to partition a set of $n$ elements into $k$ nonempty subsets.

Calculation of the composita  is  essential for obtaining the coefficients function of a composition of generating functions.

Starting with functions $f(n)$, $r(n)$ and their generating functions
\begin{math}
F(x)=\sum_{n\geq1}f(n)x^{n}
\end{math},
\begin{math}
R(x)=\sum_{n\geq0}r(n)x^{n}
\end{math}
accordingly, we consider a composition of the generating functions 
\begin{math}
H(x)=R\left( F(x)\right)
\end{math}.
For the generating function 
\begin{math}
H(x)=\sum_{n\geq0}h(n)x^{n}
\end{math},
the coefficients function  is determined by the expression:
\begin{equation}
	\label{composition}
 	h(n)=\sum^{n}_{k=1}\sum_{\lambda_i>0 \atop 		\lambda_1+\lambda_2+\ldots+\lambda_k=n}f(\lambda_{1})f(\lambda_{2})\ldots f(\lambda_{k})r(k)=\sum^{n}_{k=1}F^{\Delta}(n,k)r(k),\quad h(0)=r(0).
\end{equation}

\section{Main results}
In this section we consider a composition of exponential generating function and its integer properties. 

\begin{theorem}
Suppose
\begin{math}
E(x)=\sum_{n>0} e(n)\frac{x^n}{n!}
\end{math}
is an exponential generating function, and
\begin{math}
E^{\Delta}(n,k)
\end{math}
is the composita of $E(x)$.
Then the expression
\begin{equation}
\label{Enk}
\frac{n!}{k!}E^{\Delta}(n,k) 
\end{equation}
is integer for $k\leq n$
\end{theorem}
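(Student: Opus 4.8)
The plan is to unwind the definition of the composita in the exponential case, reduce $\frac{n!}{k!}E^{\Delta}(n,k)$ to a sum of multinomial coefficients weighted by products of the integers $e(m)$, and then absorb the stray factor $\frac1{k!}$ by a counting argument.

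First I would substitute $f(m)=e(m)/m!$ into the compositional formula \eqref{Fnk0}, which gives
\begin{equation*}
E^{\Delta}(n,k)=\sum_{\substack{\lambda_1+\cdots+\lambda_k=n\\ \lambda_i\ge 1}}\frac{e(\lambda_1)\cdots e(\lambda_k)}{\lambda_1!\cdots\lambda_k!},
\end{equation*}
so that
\begin{equation*}
\frac{n!}{k!}E^{\Delta}(n,k)=\frac{1}{k!}\sum_{\substack{\lambda_1+\cdots+\lambda_k=n\\ \lambda_i\ge 1}}\binom{n}{\lambda_1,\ldots,\lambda_k}\,e(\lambda_1)\cdots e(\lambda_k),
\end{equation*}
where $\binom{n}{\lambda_1,\ldots,\lambda_k}=\frac{n!}{\lambda_1!\cdots\lambda_k!}$ is the (integer) multinomial coefficient. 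If $k>n$ the sum is empty and the claim is trivial, so I may assume $k\le n$.

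The key step is to reinterpret the sum combinatorially. The multinomial coefficient $\binom{n}{\lambda_1,\ldots,\lambda_k}$ counts the ordered partitions $(B_1,\dots,B_k)$ of $\{1,\dots,n\}$ into nonempty blocks with $|B_i|=\lambda_i$, so $\frac{n!}{k!}E^{\Delta}(n,k)=\frac1{k!}\sum_{(B_1,\dots,B_k)}e(|B_1|)\cdots e(|B_k|)$, the sum running over all ordered set partitions of $\{1,\dots,n\}$ into $k$ nonempty blocks. Now $S_k$ acts on these ordered set partitions by permuting blocks, the summand is constant on each orbit, and — because the blocks of a set partition are pairwise disjoint nonempty sets, hence pairwise distinct — this action is free, so every orbit has size exactly $k!$. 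Dividing by $k!$ therefore identifies the sum with the sum over unordered set partitions $\pi=\{B_1,\dots,B_k\}$,
\begin{equation*}
\frac{n!}{k!}E^{\Delta}(n,k)=\sum_{\substack{\pi\ \text{partition of}\ \{1,\dots,n\}\\ \text{into}\ k\ \text{blocks}}}\ \prod_{B\in\pi}e(|B|),
\end{equation*}
which is a finite sum of products of integers, hence an integer. (Equivalently, $\frac{n!}{k!}E^{\Delta}(n,k)$ is the partial Bell polynomial $B_{n,k}(e(1),e(2),\dots)$, which is known to have nonnegative integer coefficients in the $e(m)$.)

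I expect the only genuine obstacle to be the handling of the factor $\frac1{k!}$: one cannot cancel it term by term against the sum over compositions, since a composition with repeated parts has a nontrivial $S_k$-stabilizer. Passing to the finer enumeration by multinomial coefficients — equivalently, labelling the $n$ objects being partitioned — is exactly what makes the $S_k$-action free and resolves this. As a sanity check, for $E(x)=e^x-1$ already treated in the text one has $e(m)\equiv1$, and the formula returns $\sum_{\pi}1=\genfrac{\{}{\}}{0pt}{}{n}{k}$, in agreement with \eqref{exp(x)-1}.
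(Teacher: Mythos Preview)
Your proof is correct but takes a different route from the paper's. The paper argues indirectly: it invokes the fact (cited from Stanley) that the composition $G=A\circ E$ of two exponential generating functions with integer coefficients is again such a function, writes $g(n)=\sum_{k=1}^{n}\frac{n!}{k!}E^{\Delta}(n,k)\,a(k)$, and then isolates each $\frac{n!}{k!}E^{\Delta}(n,k)$ by letting the outer coefficients $a(k)$ range over all integer sequences (in effect taking $a(j)=\delta_{jk}$). You instead unpack the composita directly and identify $\frac{n!}{k!}E^{\Delta}(n,k)$ with the partial Bell polynomial $\sum_{\pi}\prod_{B\in\pi}e(|B|)$ via the free $S_k$-action on ordered set partitions. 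The paper's route is shorter on the page but delegates the real content to the citation; your argument is self-contained, yields the explicit combinatorial formula, and is essentially the proof of the cited fact in Stanley.
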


\begin{proof} Let us consider a composition of exponential generating functions 
\begin{math}
A(E(x))
\end{math},
 where 
\begin{math}A(x)=\sum_{n\geq 0} a(n)\frac{x^n}{n!}
\end{math},
 $a(n)$ is integer. According to \cite{Stanley_2}, the composition of exponential generating functions is a exponential generating function
\begin{displaymath}
G(x)=A(E(x)),
\end{displaymath} 
where 
\begin{math}
G(x)=\sum_{n\geq 0} g(n)\frac{x^n}{n!}
\end{math}.

Using formula (\ref{composition}), we get
\begin{displaymath}
\frac{g(n)}{n!}=\sum_{k=1}^n E^{\Delta}(n,k)\frac{a(k)}{k!}.
\end{displaymath} 

Therefore, the expression 
\begin{displaymath}
\sum_{k=1}^n E^{\Delta}(n,k)a(k)\frac{n!}{k!}
\end{displaymath} 
is integer. 

Since $g(k)$ can be any integer, the expression (\ref{Enk}) is integer.

The theorem is proved. 
\end{proof}

\begin{corollary}
\label{cor1}
Suppose $E^{\Delta}(n,k)$ is the composita of an exponential generating function.
Then  the expression 
\begin{equation}\label{coEnk}
\sum_{k=2}^{n-1} E^{\Delta}(n,k)\frac{(n-1)!}{k!} 
\end{equation}
is integer for all prime $n$.
\end{corollary}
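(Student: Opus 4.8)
The plan is to reduce the statement to an elementary divisibility of multinomial coefficients. Write $n=p$ for the prime in question and set $M_k:=\dfrac{p!}{k!}\,E^{\Delta}(p,k)$, which is an integer for every $k\le p$ by Theorem~1. Since $\dfrac{(p-1)!}{k!}=\dfrac1p\cdot\dfrac{p!}{k!}$, the $k$-th summand in \eqref{coEnk} equals $M_k/p$, so it suffices to prove that $p\mid M_k$ whenever $2\le k\le p-1$. Applying formula \eqref{Fnk0} to the composita of $E(x)=\sum_{n>0}e(n)x^n/n!$, namely $E^{\Delta}(p,k)=\sum_{\lambda_1+\cdots+\lambda_k=p,\ \lambda_i\ge 1}\prod_{i=1}^{k}e(\lambda_i)/\lambda_i!$, one gets
\[
k!\,M_k=\sum_{\substack{\lambda_1+\cdots+\lambda_k=p\\ \lambda_i\ge 1}}\binom{p}{\lambda_1,\lambda_2,\ldots,\lambda_k}\,e(\lambda_1)e(\lambda_2)\cdots e(\lambda_k).
\]

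Next I would observe that every multinomial coefficient occurring here is divisible by $p$. Indeed, if $k\ge 2$ and $\lambda_1+\cdots+\lambda_k=p$ with all $\lambda_i\ge 1$, then each part satisfies $1\le\lambda_i\le p-1$, so $p\mid\binom{p}{\lambda_1}$; since $\binom{p}{\lambda_1,\ldots,\lambda_k}=\binom{p}{\lambda_1}\binom{p-\lambda_1}{\lambda_2}\cdots\binom{\lambda_k}{\lambda_k}$ is a multiple of $\binom{p}{\lambda_1}$, it too is divisible by $p$. Hence $k!\,M_k$ is divisible by $p$. Finally, because $1\le k\le p-1$ we have $\gcd(k!,p)=1$, while $M_k\in\mathbb Z$ by Theorem~1, so $p\mid k!\,M_k$ forces $p\mid M_k$. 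Therefore $E^{\Delta}(p,k)\,\dfrac{(p-1)!}{k!}=M_k/p\in\mathbb Z$ for each $k$ with $2\le k\le p-1$, and summing over these $k$ shows that \eqref{coEnk} is an integer; for $p=2$ the sum is empty and the claim is trivial.

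I do not anticipate a serious obstacle. The only step requiring care is the passage from $p\mid k!\,M_k$ to $p\mid M_k$: this needs both that $M_k$ is already known to be an integer (supplied by Theorem~1) and that $p\nmid k!$ (guaranteed by the hypothesis $k\le n-1$, which is exactly why the sum in \eqref{coEnk} runs only up to $n-1$ — the omitted terms $k=1$ and $k=n$ equal $e(n)$ and $e(1)^{n}$ respectively and need not be divisible by $n$). The same divisibility $p\mid M_k$ also has a combinatorial reading: $M_k$ is a weighted count of partitions of a $p$-element set into $k$ blocks, the cyclic shift of order $p$ acts on such partitions with every orbit of size $p$ when $2\le k\le p-1$ and $p$ is prime, and the weight $\prod_{B}e(|B|)$ is constant on each orbit.
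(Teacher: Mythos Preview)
Your proof is correct and follows essentially the same route as the paper: both reduce the claim to the divisibility of the multinomial coefficients $\binom{p}{\lambda_1,\ldots,\lambda_k}$ by the prime $p$ when $2\le k\le p-1$. Your write-up is in fact more complete than the paper's, which asserts the integrality for $1<k<n$ directly from ``no $\lambda_i$ equals $n$'' without spelling out the step $p\mid k!\,M_k\Rightarrow p\mid M_k$; your explicit appeal to Theorem~1 (to know $M_k\in\mathbb{Z}$) together with $\gcd(k!,p)=1$ closes that gap cleanly.
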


\begin{proof} 
Let us consider the following cases:
\begin{itemize}
\item For $k=1$. The composita equals to
\begin{math}
E^{\Delta}(n,1)=\frac{e(n)}{n!}.
\end{math}
Hence, the expression 
\begin{displaymath}
 E^{\Delta}(n,k)\frac{(n-1)!}{k!}=\frac{e(n)}{n}
\end{displaymath}
is not integer for $k=1$.

\item For $k=n$. The composita equals to
\begin{math}
E^{\Delta}(n,n)={e(1)^n}.
\end{math}
Hence, the expression 
\begin{displaymath}
 E^{\Delta}(n,k)\frac{(n-1)!}{k!}=\frac{e(1)^n}{n}
\end{displaymath}
is not integer for $k=n$.

\item For $1<k<n$.
According to (\ref{Fnk0}), the composita equals to
\begin{displaymath}
E^{\Delta}(n,k)=\sum_{\pi_k\in C_n} \frac{e(\lambda_1)e(\lambda_2)\ldots e(\lambda_k)}
{\lambda_1! \lambda_2!\ldots \lambda_k!}.
\end{displaymath}
From $\pi_k$ is the composition 
\begin{math}
\lambda_1+\lambda_2+\ldots+\lambda_k=n
\end{math}
and $k>1$ it follows that no exists $\lambda_i$ such that 
\begin{math}
\lambda_i\neq n.
\end{math}
Hence, the expression 
\begin{displaymath}
\sum_{\pi_k\in C_n} \frac{e(\lambda_1)e(\lambda_2)\ldots e(\lambda_k)}
{\lambda_1! \lambda_2!\ldots \lambda_k!}\frac{(n-1)!}{k!}
\end{displaymath}
is integer for all prime $n$ and for $1<k<n$.

Therefore, the expression 
\begin{displaymath}
\sum_{k=2}^{n-1} E^{\Delta}(n,k)\frac{(n-1)!}{k!} 
\end{displaymath}
is integer for all prime $n$.
\end{itemize}
\end{proof}

We represent the expression (\ref{coEnk})
\begin{equation}\label{coEnksum}
\frac{1}{n}\left(n!\sum_{k=1}^{n} E^{\Delta}(n,k)\frac{1}{k!}-e(n)-e(1)^n\right),
\end{equation}
where
\begin{displaymath}
g(n)=n!\sum_{k=1}^{n} E^{\Delta}(n,k)\frac{1}{k!}
\end{displaymath}
is the coefficients function of the composition of generating functions 
\begin{math}
G(x)=\exp(E(x)).
\end{math}

In the general case it also holds.
For the composition of generating functions
\begin{displaymath}
G(x)=A(E(x))=\sum_{n\geq 0} g(n)\frac{x^n}{n!},
\end{displaymath}
where
\begin{displaymath}
A(x)=\sum_{n\geq 0} a(n)\frac{x^n}{n!},
\qquad
E(x)=\sum_{n\geq 1} e(n)\frac{x^n}{n!}
\end{displaymath}
are exponential generating functions, the expression 
\begin{equation}
\frac{1}{n}\left(g(n)-e(n)a(1)-e(1)^na(n)\right)
\end{equation}
is integer for all prime $n$. 

As applications of Corollary \ref{cor1}, we consider the following examples.

\begin{example}
Consider the following composition of generating function:
\begin{displaymath}
G(x)=e^{e^x-1}=\sum_{n\geq 0} g(n)\frac{x^n}{n!}.
\end{displaymath}

The composita of
\begin{math}
E(x)=\exp(x)-1
\end{math}, according to (\ref{exp(x)-1}),
is equal to
\begin{displaymath}
E^{\Delta}(n,k)=\frac{k!}{n!}\genfrac{\{}{\}}{0pt}{}{n}{k}.
\end{displaymath}

Then for the composition
\begin{math}
e^{e^x-1},
\end{math}
the expression
\begin{displaymath}
\frac{1}{n}\left(n!\sum_{k=1}^n \frac{k!}{n!}\genfrac{\{}{\}}{0pt}{}{n}{k}\frac{1}{k!}-1-1^n\right)
\end{displaymath}
is integer for all prime $n$.

Since $n>0$, we get that
the expression
\begin{equation}
\label{Touchard}
(B_n-2)\equiv 0 \mod n.
\end{equation}
is integer for all prime $n$.

Here $B_n$ are the Bell numbers (counting the ways to
partition a set of $n$ elements) \cite{Comtet_1974,ConcreteMath,Bell_1934}.

In 1933 J. Touchard \cite{Tou} proved the next congruence for the Bell numbers:
\begin{equation}
B_{n+k}\equiv B_{k+1}+B_k\pmod{n}
\end{equation}
for any prime number $p$.

The expression (\ref{Touchard}) is a special case of Touchard's Congruence (for $k=0$).
\end{example}

\begin{example}
Let us consider the following composition of exponential generating functions
\begin{displaymath}
E(x)=e^{x+\frac{1}{2}x^2+\frac{1}{6}x^3}.
\end{displaymath}

This generating function generates the sequence of integers (A001333) \cite{oeis}.
\begin{displaymath}
\left[1, 1, 2, 5, 14, 46, 166, 652, 2780, 12644, 61136, 312676, 1680592, 9467680, 55704104, \ldots \right]
\end{displaymath}

According to \cite{KruCompositae}, the composita  of the generating function 
\begin{math}
E(x)=x+\frac{1}{2}x^2+\frac{1}{6}x^3
\end{math}
has the following form
\begin{displaymath}
E^{\Delta}(n,k)=\sum_{j=0}^{k}{{{j}\choose{n-3\,k+2\,j}}\,3^{j-k}\,{{k}\choose{j}}
 \,2^{-n+2\,k-j}}. 
\end{displaymath}

Then, according to (\ref{composition}), for finding the composition we use the following expression
\begin{displaymath}
g(n)=n!\sum_{k=1}^{n}\frac{1}{k!}\sum_{j=0}^{k}{{j}\choose{n-
 3\,k+2\,j}}\,3^{j-k}\,{{k}\choose{j}}\,2^{-n+2\,k-j}.
\end{displaymath}

Hence, the expression
\begin{equation}
(n-1)!\sum_{k=2}^{n-1}\frac{1}{k!}\sum_{j=0}^{k}{{j}\choose{n-
 3\,k+2\,j}}\,3^{j-k}\,{{k}\choose{j}}\,2^{-n+2\,k-j}
\end{equation}
is integer for all prime $n$.

A few initial terms of this expression are shown below (starting with $n=1$):
\begin{displaymath}
\left[0, 0, 1, \frac{13}{4}, 9, \frac{55}{2}, 93, \frac{2779}{8}, \frac{12643}{9}, \frac{12227}{2}, 28425, \frac{560197}{4}, 728283,\ldots \right] 
\end{displaymath}
\end{example}

\begin{example}
Let us consider the following composition of exponential generating functions
\begin{displaymath}
E(x)=e^{artanh(x)}.
\end{displaymath}

This generating function generates the sequence of integers (A000246) \cite{oeis}.
\begin{displaymath}
\left[1, 1, 1, 3, 9, 45, 225, 1575, 11025, 99225, 893025, 9823275, 108056025, 1404728325, \ldots \right]
\end{displaymath}

The composita  of the generating function 
\begin{math}
E(x)=artanh(x)
\end{math}
has the following form
\begin{displaymath}
E^{\Delta}(n,k)=k!\,\sum_{m=k}^{n}\frac{2^{m-k}}{m!}\genfrac{[}{]}{0pt}{}{m}{k}\,
 {{n-1}\choose{m-1}}.
\end{displaymath}

Here
\begin{math}
\genfrac{[}{]}{0pt}{}{m}{k}
\end{math} 
are the Stirling
numbers of the first kind with parameters $m$ and $k$ (counting the ways
to partition a set of $m$ elements into $k$ blocks) \cite{Comtet_1974}

Then, according to (\ref{composition}), for finding the composition we use the following expression
\begin{displaymath}
g(n)=n!\sum_{k=1}^{n}\sum_{m=k}^{n}\frac{2^{m-k}}{m!}\genfrac{[}{]}{0pt}{}{m}{k}\,
 {{n-1}\choose{m-1}}.
\end{displaymath}

Hence, the expression
\begin{equation}
(n-1)!\sum_{k=2}^{n-1}\sum_{m=k}^{n}\frac{2^{m-k}}{m!}\genfrac{[}{]}{0pt}{}{m}{k}\,
 {{n-1}\choose{m-1}}
\end{equation}
is integer for all prime $n$.
\end{example}

Now let us consider the composition of generating functions
\begin{math}
G(x)=B(E(x))=\sum_{n\geq 0} g(n)\frac{x^n}{n!},
\end{math}
where
\begin{math}
E(x)=\sum_{n>0} e(n)\frac{x^n}{n!}
\end{math}
is an exponential generating function,
\begin{math}
B(x)=\sum_{n\geq0} b(n)x^n
\end{math}
is an ordinary generating function with integer coefficients.

\begin{theorem}
\label{Thm2}
For the composition
\begin{math}
G(x)=B(E(x))=\sum_{n\geq 0} g(n)\frac{x^n}{n!},
\end{math}
the expression
\begin{equation}
\label{Enk2}
\frac{1}{n}\left(g(n)-e(n)b(1)\right)
\end{equation}
is integer for all prime $n$.
\end{theorem}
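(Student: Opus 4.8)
The plan is to reduce Theorem~\ref{Thm2} to the case already treated in Corollary~\ref{cor1}. Since $B(x)=\sum_{n\geq0}b(n)x^n$ has integer coefficients but is an \emph{ordinary} generating function, the first step is to rewrite it as an exponential generating function so that the composition $B(E(x))$ becomes a composition of two exponential generating functions and the machinery of formula~(\ref{composition}) applies. Concretely, I would set $A(x)=B(x)=\sum_{n\geq0} a(n)\frac{x^n}{n!}$ with $a(n)=n!\,b(n)$, which is still an integer sequence; then $G(x)=A(E(x))$ is an exponential generating function $\sum_{n\geq0} g(n)\frac{x^n}{n!}$, and the remark following Corollary~\ref{cor1} applies verbatim.

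Next I would invoke that remark: for $G(x)=A(E(x))$ the quantity $\frac{1}{n}\bigl(g(n)-e(n)a(1)-e(1)^n a(n)\bigr)$ is integer for all prime $n$. Substituting $a(1)=1!\,b(1)=b(1)$ and $a(n)=n!\,b(n)$ gives that
\begin{displaymath}
\frac{1}{n}\bigl(g(n)-e(n)b(1)-e(1)^n\,n!\,b(n)\bigr)
\end{displaymath}
is integer for prime $n$. The remaining task is to dispose of the term $e(1)^n\,n!\,b(n)$: since $n$ is prime it divides $n!$, hence $n \mid e(1)^n\,n!\,b(n)$, so this term contributes an integer after division by $n$ and can be absorbed. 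Subtracting it off leaves $\frac{1}{n}\bigl(g(n)-e(n)b(1)\bigr)$, which is therefore integer for all prime $n$, as claimed.

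I expect no serious obstacle; the only point requiring a little care is making sure the bookkeeping between the ordinary coefficients $b(n)$ and the exponential coefficients $a(n)=n!\,b(n)$ is consistent with the definition of the composition in~(\ref{composition}) — i.e.\ that $g(n)=n!\sum_{k=1}^{n} E^{\Delta}(n,k)a(k)=n!\sum_{k=1}^{n} E^{\Delta}(n,k)k!\,b(k)$, which matches the direct expansion $B(E(x))=\sum_k b(k)E(x)^k=\sum_k b(k)\sum_n E^{\Delta}(n,k)x^n$ after reindexing by $\frac{x^n}{n!}$. If one prefers a direct route avoiding the reduction, one can also argue as in the proof of Corollary~\ref{cor1}: split $g(n)=n!\sum_{k=1}^n E^{\Delta}(n,k)k!\,b(k)$ according to $k=1$, $1<k<n$, and $k=n$; the $k=1$ term is $e(n)b(1)$, the middle terms are each divisible by $n$ after the factor $(n-1)!$ (using primality of $n$ and the argument that no part $\lambda_i$ equals $n$), and the $k=n$ term is $e(1)^n\,n!\,b(n)$, again divisible by $n$. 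Either way the conclusion follows.
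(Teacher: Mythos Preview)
Your argument is correct, but it takes a different route from the paper. The paper proves Theorem~\ref{Thm2} directly: it writes
\[
g(n)=n!\sum_{k=1}^n b(k)\sum_{\pi_k\in C_n}\frac{e(\lambda_1)\cdots e(\lambda_k)}{\lambda_1!\cdots\lambda_k!}
=\sum_{k=1}^n b(k)\sum_{\pi_k\in C_n}\binom{n}{\lambda_1,\ldots,\lambda_k}e(\lambda_1)\cdots e(\lambda_k),
\]
and observes that for prime $n$ and every $k>1$ the multinomial coefficient $\binom{n}{\lambda_1,\ldots,\lambda_k}$ is divisible by $n$ (since each $\lambda_i<n$). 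Thus everything except the $k=1$ term $e(n)b(1)$ is already divisible by $n$; no separate treatment of $k=n$ is needed. Your main route instead reduces to the general remark following Corollary~\ref{cor1} (with $a(k)=k!\,b(k)$) and then disposes of the extra term $e(1)^n\,n!\,b(n)$ via $n\mid n!$. That reduction is valid, but note that the remark you invoke is only asserted, not proved, in the paper --- so your argument is really relying on the same multinomial divisibility fact underneath. Your ``direct route'' at the end is essentially the paper's proof, except that you split off $k=n$ unnecessarily: the multinomial argument already covers it.

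One slip in your bookkeeping paragraph: with $a(k)=k!\,b(k)$, formula~(\ref{composition}) gives
\[
\frac{g(n)}{n!}=\sum_{k=1}^n E^{\Delta}(n,k)\,\frac{a(k)}{k!}=\sum_{k=1}^n E^{\Delta}(n,k)\,b(k),
\]
so $g(n)=n!\sum_k E^{\Delta}(n,k)\,b(k)$, \emph{not} $n!\sum_k E^{\Delta}(n,k)\,k!\,b(k)$ as you wrote. This does not affect your main argument (which never uses that formula), and your stated $k=1$ and $k=n$ terms in the alternative route are in fact consistent with the correct version.
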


\begin{proof}
From (\ref{composition}) it follows that
\begin{displaymath}
g(n)=n!\sum_{k=1}^nb(k)\sum_{\pi_k\in C_n} \frac{e(\lambda_1)e(\lambda_2)\ldots e(\lambda_k)}
{\lambda_1! \lambda_2!\ldots \lambda_k!}.
\end{displaymath}

Then
\begin{displaymath}
n!\sum_{\pi_k\in C_n} \frac{e(\lambda_1)e(\lambda_2)\ldots e(\lambda_k)}
{\lambda_1! \lambda_2!\ldots \lambda_k!}=\sum_{\pi_k\in C_n}{n \choose {\lambda_1, \lambda_2\ldots \lambda_k} }e(\lambda_1)e(\lambda_2)\ldots e(\lambda_k).
\end{displaymath}

Since $n$ is prime, the multinomial coefficient is divided by $n$ evenly for $k>1$.
Therefore,
the expression
\begin{displaymath}
\frac{1}{n}\left(n!\sum_{k=1}^n E^{\Delta}(n,k)b(k)-e(n)b(1)\right)
\end{displaymath}
is integer for all prime $n$.

The theorem is proved.
\end{proof}

As applications of Theorem \ref{Thm2}, we consider the following example.
\begin{example}
Let us consider the generating function for the Euler numbers A000111 \cite{oeis,Stanley_1}
\begin{equation}
\frac{1}{1-\sin(x)}=\sum_{n\geq 0} E(n+1)\frac{x^n}{n!}
\end{equation}

Since
\begin{displaymath}
E(x)=\sin(x)=\sum_{n\geq0}\frac{\left((-1)^{n-1}+1\right)(-1)^{\frac{n+1}{2}+n}}{2}\frac{x^n}{n!},
\end{displaymath}
the expression 
\begin{equation}
E(n+1)-\frac{\left((-1)^{n-1}+1\right)(-1)^{\frac{3n+1}{2}}}{2} \equiv 0 \mod n 
\end{equation}
holds for all prime $n$.
\end{example}


\begin{thebibliography}{9}

\bibitem{KruCompositae}
V.~V. Kruchinin.
\newblock Compositae and their properties.
\newblock preprint.
\newblock URL {http://arxiv.org/abs/1103.2582}.

\bibitem{Comtet_1974}
L.~Comtet.
\newblock \emph{Advanced Combinatorics}.
\newblock D. Reidel Publishing Company, 1974.

\bibitem{ConcreteMath}
R.~L. Graham, D.~E. Knuth, and O.~Patashnik.
\newblock \emph{Concrete Mathematics}.
\newblock Addison-Wesley, 1989.

\bibitem{Stanley_2}
R.~P. Stanley.
\newblock \emph{Enumerative combinatorics 2}.
\newblock Cambridge Studies in Advanced Mathematics. Cambridge University
  Press, 1999.


\bibitem{Bell_1934}
E.~T. Bell.
\newblock Exponential numbers.
\newblock \emph{Amer. Math. Monthly}, 41:\penalty0 411--419,, 1934.

\bibitem{Tou}
J.~Touchard.
\newblock Propri\'et\'es arithm\'etiques de certains nombres r\'ecurrents.
\newblock \emph{Ann. Soc. Sci. Bruxelles A}, 53:\penalty0 21--31, 1933.

\bibitem{oeis}
J.~A. Sloane.
\newblock The on-line encyclopedia of integer sequences.
\newblock Published electronically at http://oeis.org/, 2012.
\newblock URL {www.oeis.org}.

\bibitem{Stanley_1}
R.~P. Stanley.
\newblock \emph{Enumerative combinatorics 1}.
\newblock Cambridge Studies in Advanced Mathematics. Cambridge University
  Press, 2nd edition, 2011.

\end{thebibliography}
\end{document}